\theoremstyle{plain}
\newtheorem{theorem}{Theorem}
\newtheorem{lemma}[theorem]{Lemma}
\newtheorem{corollary}[theorem]{Corollary}
\theoremstyle{definition} 
\newtheorem{definition}[theorem]{Definition}
\theoremstyle{remark}
\newtheorem{remark}[theorem]{Remark}
\let\c@equation\c@theorem
\numberwithin{equation}{section}
\newcommand{\lra}{\longrightarrow}
\newcommand{\Hom}{\mathrm{Hom}}
\newcommand{\ho}{\operatorname{ho}}
\newcommand{\cd}[2][]{\vcenter{\hbox{\xymatrix#1{#2}}}}
\newcommand{\dtwocell}[3][0.5]{\ar@{}[#2] \ar@{=>}?(#1)+/u  0.2cm/;?(#1)+/d 0.2cm/^{#3}}
\newcommand{\fatpullbackcorner}[1][dr]{\save*!/#1-1.75pc/#1:(-1,1)@^{|-}\restore}
\newcommand{\fatpushoutcorner}[1][dr]{\save*!/#1+1.75pc/#1:(1,-1)@^{|-}\restore}
\title{A counterexample in quasi-category theory}
\author{Alexander Campbell}
\thanks{The author gratefully acknowledges the support of Australian Research Council Discovery Grant DP160101519.}
\address{Centre of Australian Category Theory \\ Macquarie University \\ NSW 2109 \\ Australia}
\email{alexander.campbell@mq.edu.au}
\urladdr{http://web.science.mq.edu.au/~alexc/}
\subjclass[2010]{18G30, 18G55, 55U10, 55U35}
\date{10 April 2019}
\begin{document}

\begin{abstract}
We give an example of a morphism of simplicial sets which is a monomorphism, bijective on $0$-simplices, and a weak categorical equivalence, but which is not inner anodyne. This answers an open question of Joyal. Furthermore, we use this morphism to refute a plausible description of the class of fibrations in Joyal's model structure for quasi-categories.
\end{abstract}

\maketitle

\section{Introduction} \label{secintro}
A morphism of simplicial sets is said to be \emph{inner anodyne} (or an \emph{inner anodyne extension}) if it belongs to the saturated class generated by the inner horn inclusions $\Lambda^n_k \lra \Delta^n$, $0<k<n$, i.e.\ if it can be expressed as a retract of a countable composite of pushouts of coproducts of inner horn inclusions. It is easily shown that every inner anodyne extension is (i) a monomorphism, (ii) bijective on $0$-simplices, and (iii) a weak categorical equivalence (i.e.\ a weak equivalence in Joyal's model structure for quasi-categories \cite{joyalbarcelona}, \cite{MR2522659}). This statement admits a partial converse: if a morphism of simplicial sets whose codomain is a quasi-category satisfies the properties (i)--(iii), then it is inner anodyne. 

The purpose of this note is to show that the complete converse of this statement is false; we give in \S\ref{secex} an example of a morphism of simplicial sets (therein denoted $f \colon \Delta^1 \lra S$, see Definition \ref{def})  which satisfies the properties (i)--(iii), but which is not inner anodyne  (see Theorem \ref{thm}). This answers an open question of Joyal (see \cite[\S 2.10]{joyalnotes}). 

The existence of such a morphism yields answers to a few related open questions. For example, we show (Corollary \ref{cor1}) that the class of inner anodyne extensions does not have the left cancellation property.\footnote{Stevenson \cite{MR3812459} proved that the class of inner anodyne extensions does have the right cancellation property.} Furthermore, we refute a plausible description of the fibrations in Joyal's model structure for quasi-categories. Recall that a morphism of simplicial sets is said to be an \emph{inner fibration} if it has the right lifting property with respect to the inner horn inclusions (and hence all inner anodyne extensions). Joyal proved that a morphism of simplicial sets $p \colon X \lra Y$ whose codomain $Y$ is a quasi-category is a fibration in the model structure for quasi-categories if and only if it is an inner fibration and an isofibration on homotopy categories (i.e.\ the induced functor $\ho(p) \colon \ho(X) \lra \ho(Y)$ between homotopy categories is an isofibration). We show (Corollary \ref{cor2}) that these two properties fail to describe the fibrations with arbitrary codomain.

\section{The counterexample} \label{secex}
\begin{definition} \label{def}
Let $S$ denote the simplicial set freely generated by a $2$-simplex $\alpha$ whose 2nd face $d_2(\alpha)$ is degenerate, which is defined by the following pushout diagram in the category of simplicial sets.
\begin{equation*}
\cd[@=3em]{
\Delta^1 \ar[r] \ar[d]_-{d^2}  & \Delta^0 \ar[d]^-x \\
\Delta^2 \ar[r]_-{\alpha} & \fatpushoutcorner S
}
\end{equation*}
Let $f \colon \Delta^1 \lra S$ denote the composite of the morphisms $d^1 \colon \Delta^1 \lra \Delta^2$ and $\alpha \colon \Delta^2 \lra S$, which picks out the 1st face of the $2$-simplex $\alpha$ in $S$.
\end{definition}

The simplicial set $S$ may be displayed as a quotient of the $2$-simplex $\Delta^2$ as below. 
\begin{equation*}
\cd[@=3em]{
0 \ar[rr]^-{02} \ar[dr]_-{01} & {}  \dtwocell[0.4]{d}{012} & 2 \\
& 1 \ar[ur]_-{12} & \\
}
\qquad
\underset{\alpha}{\longmapsto}
\qquad
\cd[@=3em]{
x \ar[rr]^-f \ar@{=}[dr] & {}  \dtwocell[0.4]{d}{\alpha} & y \\
& x \ar[ur]_-g & \\
}
\end{equation*}
Thus $S$ has two $0$-simplices $x$ and $y$ (say), two non-degenerate $1$-simplices $f$ and $g$ with $d_1(f) = d_1(g) = x$ and $d_0(f) = d_0(g) = y$, and one non-degenerate $2$-simplex $\alpha$ with $d_2(\alpha) = s_0(x)$, $d_1(\alpha) = f$, and $d_0(\alpha) = g$. 

\begin{remark}
The simplicial set $S$ of Definition \ref{def} can also be described as the ``right suspension'' of the simplicial set $\Delta^1$, which is denoted variously as $C_R(\Delta^1)$ in \cite[\S4.3]{MR2764043}, $\Delta^2_{1|2}$ in \cite[\S15.4]{MR3221774}, and $\Sigma^r\Delta^1$ in \cite[\S7.1]{rvcomp}. Thus a morphism $u$ from  $S$ to any simplicial set $X$ amounts to a $1$-simplex in the right hom-space $\Hom_X^R(u(x),u(y))$ of $X$ \cite[\S1.2.2]{MR2522659}. Moreover, the morphism $f \colon \Delta^1 \lra S$ can be described as the right suspension of the morphism $d^1 \colon \Delta^0 \lra \Delta^1$.
\end{remark}

We shall see from the following list of its properties that the morphism $f \colon \Delta^1 \lra S$ of Definition \ref{def} is a counterexample to the statements considered in \S \ref{secintro}.

\begin{lemma} \label{lem}
The morphism $f \colon \Delta^1 \lra S$ is
\begin{enumerate}[font=\normalfont, label=(\roman*)]
\item a monomorphism,
\item bijective on $0$-simplices,
\item a weak categorical equivalence, and
\item an inner fibration.
\end{enumerate}
\end{lemma}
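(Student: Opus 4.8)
The plan is to verify the four clauses in turn; (i) and (ii) are quick, and (iii) and (iv) carry the content. I use freely the explicit description of $S$ recorded above ($2$-dimensional, with $0$-simplices $x,y$, non-degenerate $1$-simplices $f,g\colon x\to y$, and one non-degenerate $2$-simplex $\alpha$ with $d_2\alpha=s_0x$, $d_1\alpha=f$, $d_0\alpha=g$). For (i), write $f=\alpha\circ d^1$, where $d^1\colon\Delta^1\hookrightarrow\Delta^2$ is the inclusion of the face $\Delta^{\{0,2\}}$ and $\alpha\colon\Delta^2\to S$ is the quotient collapsing $\Delta^{\{0,1\}}$; since $d^1$ is monic it is enough that $\alpha$ is injective on $\Delta^{\{0,2\}}$, and this holds because $\Delta^{\{0,2\}}\cap\Delta^{\{0,1\}}=\Delta^{\{0\}}$, so the only simplices of $\Delta^{\{0,2\}}$ lying over the collapsed edge are the constant ones at $0$ (one per dimension), whence no two distinct simplices of $\Delta^{\{0,2\}}$ are identified. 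Clause (ii) is immediate, $f$ carrying the two vertices of $\Delta^1$ onto $S_0=\{x,y\}$.

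For (iii), I would introduce the retraction $r\colon S\to\Delta^1$ induced by the monotone surjection $[2]\to[1]$ with $0,1\mapsto0$ and $2\mapsto1$; one checks directly from the description of $S$ that $r\circ f=\mathrm{id}_{\Delta^1}$ and also $r\circ g=\mathrm{id}_{\Delta^1}$, writing $g$ also for the map $\Delta^1\to S$ picking out the $1$-simplex $g$. The key point is that the commutative square with corners $\Lambda^2_1$, $\Delta^1$, $\Delta^2$, $S$ (reading left-to-right, top-to-bottom), whose left map is the inner horn inclusion $\Lambda^2_1\hookrightarrow\Delta^2$, whose top map collapses the edge $\Delta^{\{0,1\}}\subseteq\Lambda^2_1$, whose bottom map is $\alpha$, and whose right map is $g$, is a pushout: a cocone under it amounts to a map out of $\Delta^2$ that is constant on $\Delta^{\{0,1\}}$, that is, a map out of $S=\Delta^2/\Delta^{\{0,1\}}$. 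Hence $g\colon\Delta^1\to S$ is a pushout of an inner horn inclusion, so it is inner anodyne, hence (by \S\ref{secintro}) a weak categorical equivalence. Two applications of two-out-of-three then finish the job: $\mathrm{id}_{\Delta^1}=r\circ g$ shows $r$ is a weak categorical equivalence, and $\mathrm{id}_{\Delta^1}=r\circ f$ then shows $f$ is one.

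For (iv), observe that $f$ is, up to isomorphism, the inclusion of the subcomplex $F:=f(\Delta^1)\cong\Delta^1$ of $S$ generated by the $1$-simplex $f$. For an inclusion of simplicial sets, the inner fibration property is equivalent to the assertion that, for all $n\geq2$ and all $k$ with $0<k<n$, any $n$-simplex of the ambient complex all of whose $i$-th faces with $i\neq k$ lie in the subcomplex itself lies in the subcomplex. Writing each simplex $\sigma$ of $S$ uniquely as $\theta^*\rho$ with $\rho$ one of the non-degenerate simplices $x,y,f,g,\alpha$ and $\theta$ a monotone surjection, one sees that $\sigma\in F$ iff $\rho\in\{x,y,f\}$, equivalently iff no edge of $\sigma$ is the $1$-simplex $g$ (the only $g$-labelled edge of $g$, respectively of $\alpha$, being $g$ itself, respectively $d_0\alpha$). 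So assume $\sigma=\theta^*\rho\notin F$; the edges of $\sigma$ equal to $g$ are then exactly the pairs $\{u,v\}$ lying in a prescribed product $B\times B'$ of two consecutive blocks of $\theta$, and deleting a vertex $i$ destroys all of them precisely when $B=\{i\}$ or $B'=\{i\}$. Hence at most two indices $i$ are ``bad'' (those with $d_i\sigma\in F$); since there are $n+1\geq3$ indices in all and $0<k<n$, a short count produces an index $i\neq k$ with $d_i\sigma\notin F$ when $n\geq3$, while the case $n=2$ is handled by hand (there $k=1$, $\sigma=\alpha$ is the only simplex outside $F$ with two bad indices, namely $\{1,2\}$, and indeed $d_0\alpha=g\notin F$ with $0\neq1$).

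The main obstacle is the combinatorial bookkeeping in (iv): although everything there is elementary, one must carefully align the block structure of the degeneracy $\theta$ with the positions of the $g$-labelled edges and dispatch the low-dimensional cases individually. The pushout identification in (iii) is the only other step requiring a little care, but it is routine once set up.
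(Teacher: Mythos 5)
Your proofs of (i)--(iii) follow the paper's route exactly: (i) and (ii) are immediate, and for (iii) you exhibit $g$ as a pushout of the inner horn inclusion $\Lambda^2_1 \lra \Delta^2$ along the map collapsing the edge $\Delta^{\{0,1\}}$, then transfer the weak equivalence from $g$ to $f$ via the common retraction $r$ and two-out-of-three; your explicit verification of the pushout is a correct unpacking of what the paper asserts. For (iv) you take a genuinely different and correct route. The paper pulls $f$ back along the epimorphism $\alpha \colon \Delta^2 \lra S$, identifies the pullback with the outer horn inclusion $\Lambda^2_0 \lra \Delta^2$ (a morphism between nerves of categories, hence an inner fibration), and then lifts any inner horn of $S$ through $\alpha$ using the fact that $\Delta^n$ is projective with respect to epimorphisms. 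You instead reformulate the right lifting property for a subcomplex inclusion as a closure condition on simplices (``if all faces $d_i\sigma$ with $i \neq k$ lie in $F$, so does $\sigma$'') and verify it by Eilenberg--Zilber bookkeeping, locating the $g$-labelled edges of a degenerate simplex inside a product of two consecutive blocks of the degeneracy and counting the at most two ``bad'' indices; your count for $n \geq 3$ and your hand-check of the three $2$-simplices outside $F$ are both correct. The paper's argument is shorter and conceptually reusable (it isolates why $f$ looks like $\Lambda^2_0 \lra \Delta^2$ locally), while yours is self-contained and makes the combinatorial mechanism behind the inner fibration property completely explicit; either is a complete proof.
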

\begin{proof}
The properties (i) and (ii) are immediate from the definition of $f$. 

To prove (iii), 
observe that the morphism $g \colon \Delta^1 \lra S$ (the composite of $d^0 \colon \Delta^1 \lra \Delta^2$ and $\alpha \colon \Delta^2 \lra S$) is a weak categorical equivalence; for it is the pushout of the inner horn inclusion $\Lambda^2_1 \lra \Delta^2$ along the morphism $\Lambda^2_1 \lra \Delta^1$ which picks out the $(2,1)$-horn 
\begin{equation*}
\cd{
0 \ar@{=}[r]^-{00} & 0 \ar[r]^-{01} & 1
}
\end{equation*}
in $\Delta^1$, as displayed on the left below.
Since the two morphisms $f, g \colon \Delta^1 \lra S$ admit a common retraction, namely the unique morphism $r \colon S \lra \Delta^1$ that sends $\alpha$ to the degenerate $2$-simplex $001$ in $\Delta^1$, it follows by the two-out-of-three property that $f$ is a weak categorical equivalence.

\begin{equation*}
\cd[@=3em]{
\Lambda^2_1 \ar[r]^-{(01,-, 00)} \ar[d] & \Delta^1 \ar[d]^-g \\
\Delta^2 \ar[r]_-{\alpha} & S \fatpushoutcorner
}
\qquad
\qquad
\cd[@=3em]{
\Lambda^n_k \ar@{..>}[r]_-{\exists!} \ar[d] \ar@/^1.25pc/[rr] & \Lambda^2_0 \ar[r] \ar[d] \fatpullbackcorner & \Delta^1 \ar[d]^-f \\
\Delta^n \ar@/_1.25pc/[rr]_-{} \ar@{..>}[r]^-{\exists} \ar@{..>}[ur]|-{\exists} & 
\Delta^2 \ar@{->>}[r]^-{\alpha} & S
}
\end{equation*}

To prove (iv), observe that the pullback of $f \colon \Delta^1 \lra S$ along the epimorphism $\alpha \colon \Delta^2 \lra S$ is the outer horn inclusion $\Lambda^2_0 \lra \Delta^2$, which is an inner fibration (since it is a morphism between nerves of categories). It then follows by a simple argument (indicated by the diagram on the right above) that $f \colon \Delta^1 \lra S$ is an inner fibration.
\end{proof}

\begin{theorem} \label{thm}
There exists a morphism of simplicial sets which is a monomorphism, bijective on $0$-simplices, and a weak categorical equivalence, but which is not inner anodyne.
\end{theorem}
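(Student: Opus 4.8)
The plan is to show that the morphism $f \colon \Delta^1 \lra S$ of Definition \ref{def} is itself the desired counterexample. By parts (i)--(iii) of Lemma \ref{lem}, this morphism is already known to be a monomorphism, bijective on $0$-simplices, and a weak categorical equivalence, so everything comes down to proving that $f$ is \emph{not} inner anodyne. The leverage for this is part (iv) of Lemma \ref{lem}: $f$ is an inner fibration, i.e.\ it has the right lifting property with respect to the inner horn inclusions, and hence (since the class of maps with this right lifting property is saturated) with respect to every inner anodyne extension.

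First I would argue by contradiction: suppose $f$ were inner anodyne. Then $f$ would lie in the saturated class generated by the inner horn inclusions, and therefore $f$ would have the \emph{left} lifting property with respect to every inner fibration; in particular, $f$ would have the left lifting property with respect to $f$ itself. Applying this to the commutative square both of whose horizontal edges are identities,
\begin{equation*}
\cd{
\Delta^1 \ar@{=}[r] \ar[d]_-f & \Delta^1 \ar[d]^-f \\
S \ar@{=}[r] \ar@{..>}[ur]|-{\,h\,} & S
}
\end{equation*}
we would obtain a morphism $h \colon S \lra \Delta^1$ with $h \circ f = \mathrm{id}_{\Delta^1}$ (upper triangle) and $f \circ h = \mathrm{id}_S$ (lower triangle). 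Thus $f$ and $h$ would be mutually inverse, so $f$ would be an isomorphism.

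To derive the contradiction it then remains to observe that $f$ is not an isomorphism, which is immediate from the explicit description of $S$ given after Definition \ref{def}: the simplicial set $S$ has a non-degenerate $2$-simplex $\alpha$ (and a second non-degenerate $1$-simplex $g$), whereas $\Delta^1$ has neither, so $f$ is not even surjective. Hence $f$ cannot be inner anodyne, and $f$ witnesses the theorem.

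I do not expect any genuine obstacle in this final step: the whole weight of the argument has been front-loaded into Lemma \ref{lem}, whose delicate parts are (iii) (identifying $g$, and hence $f$, as a weak categorical equivalence via the common retraction $r$) and (iv) (the pullback-along-$\alpha$ argument showing $f$ is an inner fibration). Given those, the conclusion is just the standard retract-type observation that a morphism lying simultaneously in the left class (inner anodynes) and the right class (inner fibrations) of this weak factorisation system must be an isomorphism. The only point requiring the slightest care is confirming that $f$ really does fail to be an isomorphism, and this is transparent.
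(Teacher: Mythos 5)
Your proposal is correct and follows essentially the same route as the paper: cite Lemma \ref{lem}(i)--(iii) for the positive properties, then use Lemma \ref{lem}(iv) to conclude that if $f$ were inner anodyne it would lift against itself and hence be an isomorphism, which it visibly is not. Your write-up merely spells out the standard ``lifts against itself implies isomorphism'' step that the paper leaves implicit.
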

\begin{proof}
By Lemma \ref{lem}, the morphism $f \colon \Delta^1 \lra S$ of Definition \ref{def} has the first three listed properties. To prove that $f$ is not inner anodyne, it suffices to observe that $f$ is an inner fibration by Lemma \ref{lem} and is evidently not an isomorphism; for if a morphism is both inner anodyne and an inner fibration, then it has the right lifting property with respect to itself, and is therefore an isomorphism.
\end{proof}

\begin{corollary} \label{cor1}
The class of inner anodyne extensions does not have the left cancellation property; that is, there exists a composable pair of monomorphisms of simplicial sets $i,j$ such that $j$ and $ji$ are inner anodyne, but such that $i$ is not inner anodyne.
\end{corollary}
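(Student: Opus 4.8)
The plan is to take $i$ to be the morphism $f \colon \Delta^1 \lra S$ of Definition \ref{def}, and to take $j \colon S \lra T$ to be a fibrant replacement of $S$ in Joyal's model structure obtained by the small object argument; that is, $j$ is the inner anodyne extension appearing in a factorisation of the canonical morphism $S \lra \Delta^0$ as an inner anodyne extension followed by an inner fibration. By construction $T \lra \Delta^0$ is then an inner fibration, so $T$ is a quasi-category, and $j$ is a monomorphism since inner anodyne extensions are.

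It then remains to verify that the composite $ji \colon \Delta^1 \lra T$ is inner anodyne. Since every inner anodyne extension is a monomorphism, is bijective on $0$-simplices, and is a weak categorical equivalence, and since $f$ enjoys these same three properties by Lemma \ref{lem}, the composite $ji$ is a monomorphism, is bijective on $0$-simplices, and (weak categorical equivalences being closed under composition) is a weak categorical equivalence. As its codomain $T$ is a quasi-category, the partial converse to the characterisation of inner anodyne extensions recalled in \S\ref{secintro} applies, and $ji$ is inner anodyne. Thus $i$ and $j$ form a composable pair of monomorphisms such that $j$ and $ji$ are inner anodyne, while $i = f$ is not inner anodyne by Theorem \ref{thm}; this is the desired failure of left cancellation.

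I do not expect any genuine obstacle here: the argument is essentially formal once one grants the existence of the fibrant replacement $j$ (the small object argument) and the cited partial converse. It is perhaps worth remarking that $S$ is necessarily not itself a quasi-category --- otherwise $f$ would satisfy the hypotheses of that partial converse and hence be inner anodyne, contradicting Theorem \ref{thm} --- so $j$ is a genuinely non-invertible inner anodyne extension, as indeed it must be if the conclusion is to hold.
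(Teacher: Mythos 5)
Your proposal is correct and is essentially identical to the paper's own proof: both take $i=f$, obtain $j$ as an inner anodyne extension into a quasi-category via the small object argument, and conclude that $ji$ is inner anodyne from the partial converse for morphisms with quasi-category codomain. No issues.
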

\begin{proof}
Let $i \colon A \lra B$ be a morphism as described in Theorem \ref{thm}, and let $j \colon B \lra C$ be an inner anodyne extension with $C$ a quasi-category (as  may be constructed by the small object argument). Then their composite $ji \colon A \lra C$ is a monomorphism, bijective on $0$-simplices, and a weak categorical equivalence, and hence is inner anodyne, since its codomain is a quasi-category (see for instance \cite[Lemma 2.19]{stevbifib}). But $i$ is not inner anodyne.
\end{proof}

\begin{corollary} \label{cor2}
There exists a morphism of simplicial sets which is an inner fibration and an isofibration on homotopy categories, but which is not a fibration in Joyal's model structure for quasi-categories.
\end{corollary}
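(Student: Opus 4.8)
The plan is to show that the morphism $f \colon \Delta^1 \lra S$ of Definition \ref{def} is itself such a counterexample. By Lemma \ref{lem}(iv) it is an inner fibration, so it remains only to check two things: that the induced functor $\ho(f) \colon \ho(\Delta^1) \lra \ho(S)$ is an isofibration, and that $f$ is nonetheless not a fibration in Joyal's model structure.

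For the first point I would identify $\ho(S)$. Since $\ho$ is a left adjoint (to the nerve) it preserves pushouts, so applying it to the square defining $S$, together with the identification $\ho(\Delta^n) = [n]$, presents $\ho(S)$ as the pushout in $\mathbf{Cat}$ of the span $[0] \longleftarrow [1] \xrightarrow{\,d^2\,} [2]$. A direct inspection shows this pushout is (isomorphic to) the poset $[1]$: the objects $0$ and $1$ of $[2]$ are glued together and the arrow between them is collapsed to an identity, whence the images of the three non-identity arrows of $[2]$ all coincide; in terms of $S$ this is exactly the relation $g\circ \mathrm{id}_x = f$ carried by $\alpha$, which forces $f = d_1(\alpha)$ and $g = d_0(\alpha)$ to become equal in $\ho(S)$. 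Since $f$ is the composite of $d^1 \colon \Delta^1 \lra \Delta^2$ with $\alpha$, the functor $\ho(f) \colon [1] \lra \ho(S)$ carries the two objects of $[1]$ to the two distinct objects of $\ho(S)$ and the non-identity arrow to the non-identity arrow; it is therefore an isomorphism of categories, and in particular an isofibration.

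For the second point I would argue by contradiction. Suppose $f$ were a fibration in Joyal's model structure. Since $f$ is a weak categorical equivalence by Lemma \ref{lem}(iii), it would then be an acyclic fibration; as the cofibrations in this model structure are precisely the monomorphisms, $f$ would have the right lifting property with respect to every monomorphism, and hence — being itself a monomorphism by Lemma \ref{lem}(i) — with respect to itself. But a morphism with the right lifting property with respect to itself is an isomorphism (lift the square both of whose horizontal legs are identities, as in the proof of Theorem \ref{thm}), and $f$ is evidently not an isomorphism. This contradiction shows $f$ is not a fibration, which completes the proof.

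The argument is essentially formal, the only computation being the pushout in $\mathbf{Cat}$, which is routine; so the "main obstacle" is really just organising that computation cleanly. I would close with a remark reconciling this with Joyal's theorem: that result characterises the fibrations \emph{whose codomain is a quasi-category}, and indeed $S$ is not a quasi-category — were it one, the partial converse recalled in \S\ref{secintro} would force the weak categorical equivalence $f$ to be inner anodyne, contradicting Theorem \ref{thm}.
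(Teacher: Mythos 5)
Your proposal is correct and follows essentially the same route as the paper: it exhibits $f \colon \Delta^1 \lra S$ itself as the counterexample, and rules out its being a fibration by observing that it is a trivial cofibration (monomorphism plus weak categorical equivalence) which is not an isomorphism. The only divergence is in verifying the isofibration condition: you compute $\ho(S)$ explicitly as a pushout in $\mathbf{Cat}$ (correctly, obtaining $[1]$), whereas the paper deduces more formally that $\ho(f)$ is an isomorphism from the facts that $f$ is bijective on $0$-simplices and a weak categorical equivalence; both verifications are sound.
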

\begin{proof}
This can be proved as a direct corollary of Theorem \ref{thm} by a factorisation and retract argument, but we will show instead that the morphism $f \colon \Delta^1 \lra S$ of Definition \ref{def} is itself such a morphism as in the statement. By Lemma \ref{lem}, $f$ is an inner fibration, and is an isomorphism (and hence an isofibration) on homotopy categories, since it is bijective on $0$-simplices and a weak categorical equivalence. But $f$ is a trivial cofibration in the model structure for quasi-categries, since it is both a monomorphism and a weak categorical equivalence, and hence is not a fibration, since it is not an isomorphism. 
\end{proof}

\begin{remark}
In unpublished work of Joyal, there is constructed a model structure on the category of simplicial sets whose cofibrations are the ``immersions'' (i.e.\ those morphisms sent to monomorphisms by the reflection to the full subcategory of reduced simplicial sets), whose weak equivalences are the bijective-on-$0$-simplices weak categorical equivalences, and whose fibrant objects are the quasi-categories. Theorem \ref{thm} implies moreover that, while every fibration in this model structure is an inner fibration (and conversely for morphisms to quasi-categories), not every inner fibration is a fibration.
\end{remark}

\end{document}